\tikzstyle{v} = [circle, draw, inner sep=2pt, minimum size=3pt, fill=black]
\theoremstyle{plain}
\newtheorem{theorem}{Theorem}[section]
\newtheorem{lemma}[theorem]{Lemma}
\newtheorem{proposition}[theorem]{Proposition}
\theoremstyle{definition}
\newtheorem{definition}[theorem]{Definition}
\newtheorem{example}[theorem]{Example}
\newtheorem{problem}[theorem]{Problem}
\begin{document}

\title{Upper Embeddability of Graphs and Products of Transpositions Associated with Edges}
\author{
Shuhei Tsujie
\thanks{Department of Mathematics, Hokkaido University of Education, Asahikawa, Hokkaido 070-8621, Japan. 
E-mail:tsujie.shuhei@a.hokkyodai.ac.jp}
\and
Ryo Uchiumi
\thanks{
Department of Mathematics, Graduate School of Science, Osaka University, Toyonaka, Osaka 560-0043, Japan. 
E-mail:uchiumi.ryou.1xu@ecs.osaka-u.ac.jp
}
}

\date{}

\maketitle

\begin{abstract}
Given a graph, we associate each edge with the transposition which exchanges the endvertices. 
Fixing a linear order on the edge set, we obtain a permutation of the vertices. 
D\'enes proved that the permutation is a full cyclic permutation for any linear order if and only if the graph is a tree. 

In this article, we characterize graphs having a linear order such that the associated permutation is a full cyclic permutation in terms of graph embeddings. 
Moreover, we give a counterexample for Eden's question about an edge ordering whose associated permutation is the identity. 
\end{abstract}

{\footnotesize \textit{Keywords}: 
full cyclic permutation ordering, 
upper-embeddable graph, 
$ 2 $-cell embedding, 
rotation system 
}

{\footnotesize \textit{2020 MSC}: 
05C25, 
05C10, 
57M15 
}

\tableofcontents

\section{Introduction}

In this article, a graph $ G $ stands for a \textbf{connected multigraph} $ G = (V_{G}, E_{G}, r_{G}) $, where 
\begin{itemize}
\item $ V_{G} $ is a finite set of vertices, 
\item $ E_{G} $ is a finite set of edges, 
\item $ r_{G} $ is a map from $ E_{G} $ to $ \binom{V_{G}}{2} $, the collection of subsets of $ V_{G} $ consisting of $ 2 $ elements. 
\end{itemize}
Note that $ r_{G}(e) $ represents the endvertices of the edge $ e $. 
Also note that loops are not allowed. 

Let $ n $ be a positive integer and suppose that $ V_{G} = [n] \coloneqq \{1, 2, \dots, n\} $. 
When an edge $ e \in E_{G} $ satisfies $ r_{G}(e) = \{u,v\} $, we associate the transposition $ \tau_{e} \coloneqq (u \ v) \in S_{n} $ with the edge $ e $, where $ S_{n} $ denotes the symmetric group of degree $ n $. 

An \textbf{edge ordering} of a graph $ G $ is a linear order $ \leq_{\omega} $ on $ E_{G} $, denoted as a sequence $ \omega = (e_{1}, \dots, e_{m}) $ in which $ e_{i} <_{\omega} e_{j} $ if and only if $ i < j $. 
Given an edge ordering $ \omega = (e_{1}, \dots, e_{m}) $, we associate the product $ \pi_{\omega} \coloneqq \tau_{e_{m}} \cdots \tau_{e_{1}} \in S_{n} $. 

\begin{definition}
A permutation $ \sigma \in S_{n} $ is called a \textbf{full cyclic permutation} if $ \sigma $ is a cyclic permutation of length $ n $. 
An edge ordering $ \omega = (e_{1}, \dots, e_{m}) $ of a graph $ G $ is a \textbf{full cyclic permutation ordering} if the corresponding permutation $ \pi_{\omega} = \tau_{e_{m}} \cdots \tau_{e_{1}} $ is a full cyclic permutation. 
\end{definition}

D{\'e}nes proved the following theorem to state a connection between labeled trees and factorization of a full cyclic permutation into transpositions. 
\begin{theorem}[D{\'e}nes \cite{denes1959representation-potmiothaos}. See also {\cite[Section 2]{moszkowski1989solution-ejoc}} and {\cite[Lemma 2.1]{pawlowski2022chromatic-ac}}]\label{Denes}
Given a graph $ G $, the following are equivalent. 
\begin{enumerate}[(i)]
\item Any edge ordering of $ G $ is a full cyclic permutation ordering. 
\item $ G $ is a tree. 
\end{enumerate}
\end{theorem}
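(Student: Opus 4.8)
The plan is to track, as one reads an edge ordering $\omega = (e_1,\dots,e_m)$ from left to right, how the cycle structure of the partial products $\tau_{e_j}\cdots\tau_{e_1}$ evolves. The basic mechanism is that left-multiplying a permutation by a transposition $(u\ v)$ \emph{splits} the cycle containing both $u$ and $v$ into two when $u,v$ lie in a common cycle, and \emph{merges} the two cycles of $u$ and $v$ into one when they lie in different cycles; either way the number of cycles changes by exactly $\pm 1$. First I would record the invariant that the cycles of $\tau_{e_j}\cdots\tau_{e_1}$ always refine the partition of $[n]$ into connected components of the subgraph $(V_G,\{e_1,\dots,e_j\})$, with equality whenever that subgraph is a forest. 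The forest case follows by an easy induction: adding an edge of a forest always joins two distinct components, which by the inductive hypothesis are two distinct cycles, so the step is a merge and the equality ``cycles $=$ components'' is preserved.

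For (ii)$\Rightarrow$(i), suppose $G$ is a tree, so $m = n-1$. Reading any ordering, each edge joins two distinct components of the forest built so far, hence every one of the $n-1$ steps is a merge. Starting from the identity, with its $n$ singleton cycles, and performing $n-1$ merges leaves exactly one cycle, so $\pi_\omega$ is an $n$-cycle for every $\omega$.

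For (i)$\Rightarrow$(ii) I would argue the contrapositive: if the connected graph $G$ is not a tree then $\beta \coloneqq m-(n-1) \ge 1$, and I must exhibit one ordering whose product is not an $n$-cycle. The sign disposes of the odd case, since $\pi_\omega$ has sign $(-1)^m$ while an $n$-cycle has sign $(-1)^{n-1}$: if $\beta$ is odd then $m \not\equiv n-1 \pmod 2$ and \emph{no} ordering can be full cyclic. When $\beta$ is even the sign is no obstruction, and here lies the real work. I would fix a spanning tree $T$, order its edges first so that the partial product is already an $n$-cycle $\rho$, and then append the $\beta$ non-tree edges. By the forest equality, the first appended edge has both endpoints in the single cycle $\rho$, so it is forced to split and we reach two cycles; writing the final count as $1-\beta+2s$, where $s$ is the number of splitting steps among the $\beta$ redundant edges, and noting that $c\ge 1$ forces $s\ge\beta/2$ for this ordering, it suffices to force just one split beyond the minimum so that the count ends at $3$ rather than $1$.

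The main obstacle is precisely this last point: after the first split into parts $A\sqcup B$, a later non-tree edge splits again only if both of its endpoints fall in the same part, and a careless order may make every remaining redundant edge straddle $A$ and $B$ and merge everything back to one cycle. I would overcome this by exploiting the freedom in the construction---the choice of $T$, the order of the tree edges (which controls the cyclic word $\rho$ and hence \emph{where} the first split falls), and which non-tree edge is split first. Concretely, by ordering $T$ so that a suitable vertex pair $\{u,w\}$ becomes adjacent in $\rho$, the first split peels off the $2$-cycle $(u\ w)$ while keeping some second non-tree edge entirely inside the large part, producing a genuine second split and hence at least three cycles. Establishing that such a choice is always available for a connected non-tree graph---ideally via induction on $\beta$ using a fundamental cycle of a non-tree edge---is the crux of the argument.
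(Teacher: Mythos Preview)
The paper does not supply its own proof of this theorem; it is quoted with references to D\'enes, Moszkowski, and Pawlowski and then used as background. So there is no in-paper argument to compare against, and what follows is an assessment of your proposal on its own.

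Your direction (ii)$\Rightarrow$(i) is correct and is the standard argument: the invariant that the cycles of the partial product refine the components of the partial subgraph, with equality on forests, immediately gives that all $n-1$ steps on a tree are merges.

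The direction (i)$\Rightarrow$(ii) has a real gap. The sign argument for odd $\beta$ is fine, but the even-$\beta$ construction is left unfinished in two respects. First, you explicitly defer the key existence claim (``Establishing that such a choice is always available \dots\ is the crux of the argument''): arranging a spanning-tree ordering so that the first non-tree edge $\{u,w\}$ peels off a $2$-cycle while a second non-tree edge lies entirely in the large part is not automatic---for instance, if all non-tree edges share a common endpoint, or are mutually parallel, no disjoint pair exists---and the needed lemma about which adjacencies are realizable in the tree $n$-cycle is never stated or proved. Second, and more seriously, even granting two forced splits at the start, your own accounting shows the final cycle count is $1-\beta+2s$, and you need $s\ge \beta/2+1$; two guaranteed splits only achieve this when $\beta=2$. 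For $\beta\ge 4$ the remaining $\beta-2$ non-tree edges are uncontrolled and may all be merges, returning the count to $1$. The suggested ``induction on $\beta$'' has no workable base case (a tree has \emph{no} non-full-cyclic ordering) and no inductive step (removing two non-tree edges and extending a bad ordering of the smaller graph faces exactly the merge-back problem you already identified for a single edge).

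A cleaner line, closer to what the cited references do, is to avoid controlling all $\beta$ non-tree edges simultaneously. Assuming every ordering of $G$ is full cyclic, fix an arbitrary edge $e=\{u,v\}$ and put it last: then every ordering $\omega'$ of $G-e$ must yield a permutation with exactly two cycles, one containing $u$ and the other $v$. One then argues that this separation property for \emph{all} $\omega'$ forces $u$ and $v$ to lie in different components of $G-e$, i.e.\ $e$ is a bridge; since $e$ was arbitrary, $G$ is a tree. Making the italicized step precise (for instance by exhibiting, when $G-e$ is connected, an ordering of $G-e$ that keeps $u$ and $v$ together) is still work, but it isolates a single concrete claim rather than a construction that must survive $\beta$ uncontrolled steps.
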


\begin{figure}[t]
\centering
\begin{minipage}[b]{0.45\linewidth}
\centering
\begin{tikzpicture}
\draw (-1.6,  0.5) node[v](1){} node[above]{$ 1 $};
\draw (-1.6, -0.5) node[v](2){} node[below]{$ 2 $};
\draw (     0,    0) node[v](3){} node[above]{$ 3 $};
\draw ( 1.6,  0.5) node[v](4){} node[above]{$ 4 $};
\draw ( 1.6, -0.5) node[v](5){} node[below]{$ 5 $};
\draw (1)--(2) node[midway, xshift=-8, yshift=0]{$ e_{1} $};
\draw (2)--(3) node[midway, xshift=2, yshift=-6]{$ e_{2} $};
\draw (3)--(4) node[midway, xshift=-2, yshift=6]{$ e_{3} $};
\draw (4)--(5) node[midway, xshift=8, yshift=0]{$ e_{4} $};
\draw (1)--(3) node[midway, xshift=2, yshift=6]{$ e_{5} $};
\draw (3)--(5) node[midway, xshift=-2, yshift=-6]{$ e_{6} $};
\end{tikzpicture}
\caption{A butterfly graph.}
\label{fig: butterfly graph}
\end{minipage}
\begin{minipage}[b]{0.45\linewidth}
\centering
\begin{tikzpicture}
\draw (-1.6,  0.5) node[v](1){};
\draw (-1.6, -0.5) node[v](2){} node[below]{\rule{0mm}{3mm}};
\draw (   0,    0) node[v](3){};
\draw (   1.6,    0) node[v](4){};
\draw ( 3.2,  0.5) node[v](5){};
\draw ( 3.2, -0.5) node[v](6){};
\draw (3)--(1)--(2)--(3)--(4)--(5)--(6)--(4);
\end{tikzpicture}
\caption{A dumbbell graph.}
\label{fig: dumbbell graph}
\end{minipage}
\end{figure}

Note that Theorem \ref{Denes} plays an important role in the studies of the chromatic symmetric functions and the chromatic operator for trees \cite{pawlowski2022chromatic-ac,foley2022transplanting}. 
Also, note that recently the second author \cite{uchiumi2023signed-ejogtaa} studied an analogue of Theorem \ref{Denes} for signed graphs and the hyperoctahedral group. 

It is a natural question to ask what graphs admit a full cyclic permutation ordering. 
For example, let $ G $ be the butterfly graph pictured in Figure \ref{fig: butterfly graph} and define the edge ordering $ \omega $ by $ \omega \coloneqq (e_{1},e_{2},e_{3},e_{4},e_{5},e_{6}) $. 
Then 
\begin{align*}
\pi_{\omega} = \tau_{e_{6}} \tau_{e_{5}} \tau_{e_{4}} \tau_{e_{3}} \tau_{e_{2}}\tau_{e_{1}} 
= (3 \ 5) (1 \ 3) (4 \ 5) (3 \ 4) (2 \ 3) (1 \ 2)
= (1 \ 3 \ 2 \ 5 \ 4). 
\end{align*}
Therefore $ \omega $ is a full cyclic permutation ordering of the butterfly graph $ G $. 

Let $ \beta(G) \coloneqq |E_{G}| - |V_{G}| + 1 $ denote the \textbf{Betti number} (also called the \textbf{circuit rank}) of the connected graph $ G $. 
When a graph $ G $ has a full cyclic permutation ordering, considering the signature of a full cyclic permutation ordering, one can show that the Betti number $ \beta(G) $ is even. 
However, the converse is false. 
For example, the dumbbell graph (Figure \ref{fig: dumbbell graph}) has no full cyclic permutation orderings although its Betti number is $ 2 $.


We regard a graph as a topological space by identifying each edge with the unit interval $ [0,1] $ and gluing them at vertices. 
Then $\beta(G)$ coincides with the Betti number of $G$ as a topological space. 
In this article, we will show that having a full cyclic permutation ordering is a topological property as follows. 

Let $ \Sigma $ be an orientable closed surface and $ \iota \colon G \to \Sigma $ an embedding. 
We call a connected component of the complement of the image of $ \iota $ a \textbf{face}. 
An embedding $ \iota $ is a \textbf{$ 2 $-cell embedding} if every face is homeomorphic to an open disk. 
For any $ 2 $-cell embedding $ \iota \colon G \to \Sigma $, 
\begin{align*}
|V_{G}| - |E_{G}| + f_{\iota} = 2-2 g_{\Sigma}, 
\end{align*}
where $ f_{\iota} $ denotes the number of faces of the $ 2 $-cell embedding $ \iota $ and $ g_{\Sigma} $ the genus of $ \Sigma $. 
Then
\begin{align*}
2g_{\Sigma} + f_{\iota} = \beta(G) + 1. 
\end{align*}
Therefore maximizing the genus $ g_{\Sigma} $ is equivalent to minimizing the number of faces $ f_{\iota} $.
Hence the \textbf{maximum genus} $ \gamma_{\mathrm{max}}(G) $, the  maximum of genus $ g_{\Sigma} $ such that there exists a $ 2 $-cell embedding $ G \to \Sigma $, satisfies 
\begin{align*}
\gamma_{\mathrm{max}}(G) \leq \left\lfloor \dfrac{\beta(G)}{2} \right\rfloor, 
\end{align*}
where $ \lfloor \ \rfloor $ denotes the floor function. 
If the equality holds, then we say that $ G $ is \textbf{upper embeddable}. 

Upper embeddable graphs are well-studied objects by many researchers \cite{
huang2000face-joctsb
,jungerman1978characterization-totams
,nebesky1981every-jogt
,nebesky1983note-cmj
,nebesky1985locally-cmj
,payan1979upper-dm,
skoviera1991maximum-dm
,skoviera1992decay-ms
,skoviera1989maximum-dm
,xuong1979how-joctsb
,xuong1979upper-embeddable-joctsb
}. 
Jungerman and Xuong gave a combinatorial characterization of upper embeddability independently. 
\begin{theorem}[{Jungerman \cite[Theorem 2]{jungerman1978characterization-totams}}, {Xuong \cite[Theorem A]{xuong1979upper-embeddable-joctsb}}]\label{Jungerman, Xuong}
A connected graph $G$ with even (odd) Betti number is upper embeddable if and only if there exists a spanning tree $T$ of $G$ such that all (all but one) connected components of $G\setminus T$ consists of an even number of edges. 
\end{theorem}

Here is the main theorem of this article. 

\begin{theorem}\label{main theorem: full cyclic}
Given a graph $ G $, the following are equivalent. 
\begin{enumerate}[(1)]
\item\label{main theorem: full cyclic 1} $ G $ has a full cyclic permutation ordering. 
\item\label{main theorem: full cyclic 2} There exists a $ 2 $-cell embedding $ \iota \colon G \to \Sigma $ such that $ f_{\iota} = 1 $. 
\item \label{main theorem: full cyclic 3} The Betti number $ \beta(G) $ is even and $ G $ is upper embeddable, that is, $ \beta(G) = 2 \gamma_{\mathrm{max}}(G) $. 
\item \label{main theorem: full cyclic 4} There exists a spanning tree $ T $ of $ G $ such that every connected components of $ G \setminus T $ consists of an even number of edges. 
\end{enumerate}
\end{theorem}

Note that the conditions (\ref{main theorem: full cyclic 2}), (\ref{main theorem: full cyclic 3}), and (\ref{main theorem: full cyclic 4}) are equivalent by the definition of upper embeddability and Theorem \ref{Jungerman, Xuong}.
Figure \ref{fig: embedding} shows how the butterfly graph can be embedded into a torus with exactly one face. 

\begin{figure}[t]
\centering
\begin{tikzpicture}[baseline=-43]
\draw (-1.6,  0.5) node[v](1){} node[above]{$ 1 $};
\draw (-1.6, -0.5) node[v](2){} node[below]{$ 2 $};
\draw (     0,    0) node[v](3){} node[above]{$ 3 $};
\draw ( 1.6,  0.5) node[v](4){} node[above]{$ 4 $};
\draw ( 1.6, -0.5) node[v](5){} node[below]{$ 5 $};
\draw (1)--(2);
\draw (2)--(3);
\draw (3)--(4);
\draw (4)--(5);
\draw (1)--(3);
\draw (3)--(5);
\draw[right hook->] (3,0) -- (5,0);
\end{tikzpicture}
\qquad
\begin{tikzpicture}
\draw (0,0) ellipse (2.5 and 1.5);
\draw (-1.2,0.1) to[out=320,in=220] (1.2,0.1);
\draw (-1.08,0) to[out=40,in=140] (1.08,0);
\draw[very thick] (0,-0.35) to[out=330,in=30] (0,-1.5);
\draw[dotted,very thick] (0,-0.35) to[out=210,in=150] (0,-1.5);
\draw[very thick] (0,0) ellipse (1.8 and 1);
\draw (0.29,-0.98) node[circle, draw, inner sep=1.5pt, minimum size=2pt, fill=black](){};
\draw (0.22,-0.62) node[circle, draw, inner sep=1.5pt, minimum size=2pt, fill=black](){};
\draw (0.2,-1.3) node[circle, draw, inner sep=1.5pt, minimum size=2pt, fill=black](){};
\draw (0,-1) node[circle, draw, inner sep=1.5pt, minimum size=2pt, fill=black](){};
\draw (0.55,-0.95) node[circle, draw, inner sep=1.5pt, minimum size=2pt, fill=black](){};
\draw (0.38, -0.66) node[scale=0.5](){$ 1 $}; 
\draw (0.35, -1.4) node[scale=0.5](){$ 2 $}; 
\draw (0.38, -1.17) node[scale=0.5](){$ 3 $};
\draw (0, -1.2) node[scale=0.5](){$ 4 $}; 
\draw (0.6, -1.15) node[scale=0.5](){$ 5 $}; 
\end{tikzpicture}
\caption{A $ 2 $-cell embedding of the butterfly graph into a torus with exactly one face. }
\label{fig: embedding}
\end{figure}
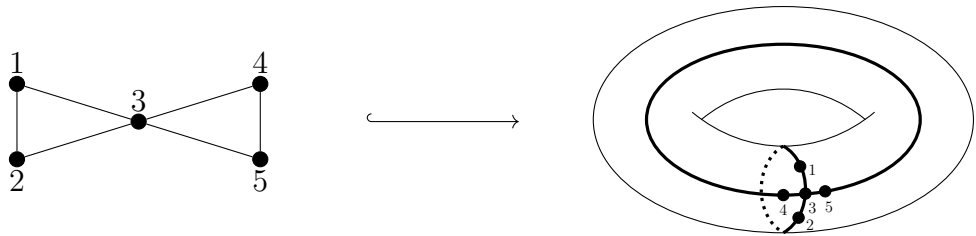

The organization of this article as follows. 
In Section \ref{section: Proof (3) => (1)}, we will prove that (\ref{main theorem: full cyclic 3}) implies (\ref{main theorem: full cyclic 1}) and give an example of constructing a full cyclic permutation ordering. 
In Section \ref{section: embedding and rotation system}, we will review the relation of $ 2 $-cell embeddings and rotation systems and we will prove that (\ref{main theorem: full cyclic 1}) implies (\ref{main theorem: full cyclic 2}). 
Combining the proofs in Section \ref{section: Proof (3) => (1)} and Section \ref{section: embedding and rotation system}, we will complete the proof of Theorem \ref{main theorem: full cyclic}. 

In Section \ref{section: identity permutation ordering}, we will study another extreme condition, that is, edge orderings $ \omega $ such that $ \pi_{\omega} $ is the identity permutation, which Eden \cite{eden1967relation-joct} studied. 
Eden gave necessary conditions for such orderings and asked whether the condition is also sufficient. 
We will give a counterexample for this question.

\section{Proof that (\ref{main theorem: full cyclic 3}) implies (\ref{main theorem: full cyclic 1})}\label{section: Proof (3) => (1)}

First, we introduce the following lemma. 
\begin{lemma}\label{uvw}
Let $\pi$ be a full cyclic permutation in $S_{n}$. 
If the distinct numbers $u,v,w$ appear in $\pi$ in this cyclic order, then the product $(u \ v)(v \ w) \pi$ is a full cyclic permutation. 
\end{lemma}
\begin{proof}
By the assumption, we can write 
\begin{align*}
\pi = (u \ a_{1} \ \cdots \ a_{r} \ v \ b_{1} \ \cdots \ b_{s} \ w \ c_{1} \ \cdots \ c_{t}), 
\end{align*}
where $a_{i}, b_{i}, c_{i}$ denote distinct numbers in $[n]\setminus \{u,v,w\}$.
Then we obtain
\begin{align*}
(u \ v)(v \ w)\pi 
&= (u \ v)(v \ w)(u \ a_{1} \ \cdots \ a_{r} \ v \ b_{1} \ \cdots \ b_{s} \ w \ c_{1} \ \cdots \ c_{t}) \\
&= (u \ a_{1} \ \cdots \ a_{r} \ w \ c_{1} \ \cdots \ c_{t} \ v \ b_{1} \ \cdots \ b_{s}),
\end{align*}
which is a full cyclic permutation.
\end{proof}

We say that two edges $e$ and $e^{\prime}$ are \textbf{adjacent} if $r_{G}(e) \cap r_{G}(e^{\prime}) \neq \varnothing$, that is, they have a common endvertex. 
Note that the case $r_{G}(e) = r_{G}(e^{\prime})$ is allowed. 

\begin{lemma}\label{Gp FCPO=>G FCPO}
Let $e$ and $e^{\prime}$ be two adjacent edges in a graph $G$. 
If $G \setminus \{e, e^{\prime}\}$ has a full cyclic permutation ordering, then $G$ has a full cyclic permutation ordering. 
\end{lemma}
\begin{proof}
Let $\omega^{\prime} = (e_{1}, \dots, e_{m})$ be a full cyclic permutation ordering of $G \setminus \{e, e^{\prime}\}$. 
If $r_{G}(e) = r_{G}(e^{\prime})$, then $\omega \coloneqq (e_{1}, \dots, e_{m}, e^{\prime}, e)$ is a full cyclic permutation ordering of $G$ since $\pi_{\omega} = \pi_{\omega^{\prime}}$. 

Now, suppose that $r_G(e) = \{u, v\}$ and $r_G(e^{\prime}) = \{v, w\}$ with $u \neq w$.
If the cycle order of $u,v,w$ in $\pi_{\omega^{\prime}}$ is $u,v,w$, then $\omega \coloneqq (e_{1}, \dots, e_{m}, e^{\prime}, e)$ is a full cyclic permutation ordering of $G$ by Lemma \ref{uvw}. 
In a symmetrical manner, if the cycle order is $w,v,u$, then let $\omega \coloneqq (e_{1}, \dots, e_{m}, e, e^{\prime})$. 
\end{proof}

The following lemma is required. 

\begin{lemma}[{Xoung \cite[Lemma 3]{xuong1979how-joctsb}}]\label{Xoung lemma}
Suppose that $G$ is upper embeddable and $\beta(G)$ is even. 
If $G$ is not a tree, then there exist two adjacent edges $e$ and $e^{\prime}$ such that $G\setminus \{e, e^{\prime}\}$ is upper embeddable. 
\end{lemma}

\begin{proof}[Proof that (\ref{main theorem: full cyclic 3}) implies (\ref{main theorem: full cyclic 1}) in Theorem \ref{main theorem: full cyclic}]
We will show that $ G $ has a full cyclic permutation ordering by induction on the Betti number $\beta(G)$.
When $ \beta(G) = 0 $, $ G $ is a tree and has a full cyclic permutation ordering by D\'enes' theorem (Theorem \ref{Denes}). 

Assume $ \beta(G) > 0$.
By Lemma \ref{Xoung lemma}, there exist two adjacent edges $e$ and $e^{\prime}$ such that $G^{\prime} \coloneqq G\setminus \{e, e^{\prime}\}$ is upper embeddable. 
By the induction hypothesis, $ G' $ has a full cyclic permutation ordering.
By Lemma \ref{Gp FCPO=>G FCPO}, $G$ has a full cyclic permutation ordering.
\end{proof}

\begin{example}
Consider the wheel graph $W_{5}$ pictured in Figure \ref{Fig: wheel graph}. 
The Betti number of $W_{5}$ is $4$. 
Let $T$ be the spanning tree of $W_{5}$ consisting of the edges $12, 23, 34, 45$. 
Then $W_{5}\setminus T$ is connected and consisting of $4$ edges. 
Therefore $W_{5}$ satisfies the condition (\ref{main theorem: full cyclic 4}) in Theorem \ref{main theorem: full cyclic} and hence has a full cyclic permutation ordering. 
We will construct a full cyclic permutation ordering following by the proof of Lemma \ref{Gp FCPO=>G FCPO}. 
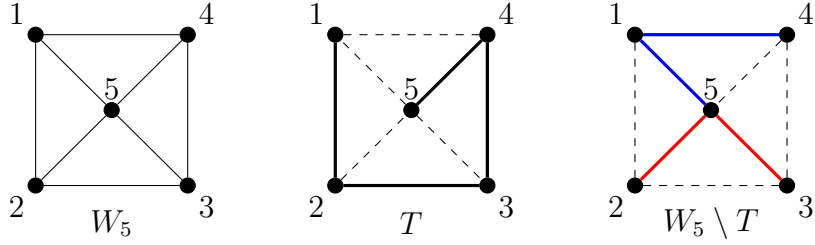
\begin{figure}[t]
\centering
\begin{tikzpicture}[baseline=0]
\draw (0,2) node[v](1){} node[above left](){$ 1 $};
\draw (0,0) node[v](2){} node[below left](){$ 2 $};
\draw (2,0) node[v](3){} node[below right](){$ 3 $};
\draw (2,2) node[v](4){} node[above right](){$ 4 $};
\draw (1,1) node[v](5){} node[above](){$ 5 $};
\draw (1) -- (2) -- (3) -- (4) -- (1) -- cycle; 
\draw (1)--(5)--(3);
\draw (2)--(5)--(4);
\draw (1,-0.5) node{$W_{5}$};
\end{tikzpicture}
\qquad
\begin{tikzpicture}[baseline=0]
\draw (0,2) node[v](1){} node[above left](){$ 1 $};
\draw (0,0) node[v](2){} node[below left](){$ 2 $};
\draw (2,0) node[v](3){} node[below right](){$ 3 $};
\draw (2,2) node[v](4){} node[above right](){$ 4 $};
\draw (1,1) node[v](5){} node[above](){$ 5 $};
\draw[very thick] (1) -- (2) -- (3) -- (4) -- (5); 
\draw[dashed] (2)--(5)--(3);
\draw[dashed] (5)--(1)--(4) ;
\draw (1,-0.5) node{$T$};
\end{tikzpicture}
\qquad
\begin{tikzpicture}[baseline=0]
\draw (0,2) node[v](1){} node[above left](){$ 1 $};
\draw (0,0) node[v](2){} node[below left](){$ 2 $};
\draw (2,0) node[v](3){} node[below right](){$ 3 $};
\draw (2,2) node[v](4){} node[above right](){$ 4 $};
\draw (1,1) node[v](5){} node[above](){$ 5 $};
\draw[dashed] (1) -- (2) -- (3) -- (4) -- (5); 
\draw[very thick, red] (2)--(5)--(3);
\draw[very thick, blue] (5)--(1)--(4) ;
\draw (1,-0.5) node{$W_{5}\setminus T$};
\end{tikzpicture}
\caption{Wheel graph $W_{5}$ and its spanning tree $T$}\label{Fig: wheel graph}
\end{figure}

We partition the edges of $W_{5}\setminus T$ into two adjacent pairs $\{25,35\}$ and $\{14,15\}$. 
Define the edge ordering $\omega_{1}$ of $T$ by $\omega_{1} \coloneqq (12,23,34,45)$. 
By D\'enes' theorem (Theorem \ref{Denes}), $\pi_{\omega_{1}}$ is a full cyclic permutation. 
Indeed we have 
\begin{align*}
\pi_{\omega_{1}} = (4 \ 5)(3 \ 4)(2 \ 3)(1 \ 2) = (2 \ 1 \ 5 \ 4 \ 3). 
\end{align*}
Next we define the edge ordering $\omega_{2}$ of $T \cup \{25,35\}$. 
Following the proof above, define $\omega_{2}$ by $\omega_{2} \coloneqq \omega_{1} \ast (35, 25)$, where $\ast$ denotes the concatenation. 
Then 
\begin{align*}
\pi_{\omega_{2}} = (2 \ 5)(3 \ 5) (2 \ 1 \ 5 \ 4 \ 3) = (4 \ 2 \ 1 \ 3 \ 5). 
\end{align*}
Similarly, define $\omega_{3}$ by $\omega_{3} \coloneqq \omega_{2} \ast (15, 14)$. 
Then 
\begin{align*}
\pi_{\omega_{3}} = (1 \ 4)(1 \ 5)(4 \ 2 \ 1 \ 3 \ 5) = (4 \ 2 \ 5 \ 1 \ 3). 
\end{align*}
Thus we obtain a full cyclic permutation ordering $\omega_{3} = (12, 23, 34, 45, 35, 25, 15, 14)$ of $W_{5}$. 
\end{example}

\section{Proof that (\ref{main theorem: full cyclic 1}) implies (\ref{main theorem: full cyclic 2})}\label{section: embedding and rotation system}

Let $ I_{G}(v) $ denote the set of edges of $ G $ incident to a vertex $ v \in V_{G} $. 
A \textbf{rotation system} of $ G $ is a collection $ \rho = (\rho_{v})_{v \in V_{G}} $ consisting of cyclic orders $ \rho_{v} $ on $ I_{G}(v) $, where a cyclic order on $ I_{G}(v) $ is an equivalence class of linear orders on $ I_{G}(v) $ obtained by identifying $ (e_{1}, e_{2}, \dots, e_{s}) $ with its circular shift $ (e_{2}, \dots, e_{s}, e_{1}) $, denoted by $ [e_{1}, \dots, e_{s}] $. 

Every embedding of $ G $ on an orientable closed surface defines a rotation system with the clockwise ordering for each vertex. 
Conversely, from a rotation system, we can obtain a $ 2 $-cell embedding of $ G $ on an orientable closed surface as follows. 

Define $ D_{G} $ by 
\begin{align*}
D_{G} \coloneqq \Set{(e,u) \in E_{G} \times V_{G} | u \in r_{G}(e)}. 
\end{align*}
We call an element of $ D_{G} $ a \textbf{dart}. 
When $r_{G}(e) = \{u,v\}$, the dart $ (e,u) $ shows an orientation of the edge $ e $ from the \textbf{source} $ u $ to the \textbf{target} $ v $. 
Define the involution $ \alpha $ on $ D_{G} $ by $ \alpha(e,u) \coloneqq (e,v) $. 

Given a rotation system $ \rho = (\rho_{v})_{v \in V_{G}} $, we will define bijections $ \sigma $ and $ \phi $ from $ D_{G} $ to itself. 
Suppose that $ \rho_{v} = [e_{1}, \dots, e_{s}] $. 
Define $ \sigma $ by $ \sigma(e_{i}, v) \coloneqq (e_{i+1}, v) $, where we consider $ e_{s+1} = e_{1} $. 
Let $ \phi \coloneqq \sigma \circ \alpha $. 

For every dart $ d $, the target of $ d $ coincides with the source of $ \phi(d) $.
Therefore each orbit in $ D_{G}/\langle \phi \rangle $ determines a closed walk on $ G $ and we can make a polygon whose sides are formed by the darts in the orbit. 
Gluing the sides of the polygons obtained from the orbits in $ D_{G}/\langle \phi \rangle $ by the involution $ \alpha $, we obtain an embedding of $ G $ on a closed surface. 
One can show that this surface is actually orientable (See \cite[Subsection 3.2]{mohar2001graphs}) and hence this embedding is the desired $ 2 $-cell embedding. 

\begin{theorem}[See {\cite[Theorem 3.2.4]{mohar2001graphs}}]
Given a graph $ G $, there exists a one-to-one correspondence between rotation systems of $ G $ and $ 2 $-cell embeddings of $ G $ on oriented closed surfaces up to orientation-preserving homeomorphism. 
\end{theorem}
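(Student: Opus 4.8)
The plan is to make the two constructions sketched above precise and to verify that they are mutually inverse. Write $\Psi$ for the map sending a $2$-cell embedding $\iota \colon G \to \Sigma$ to the rotation system $\rho^{\iota} = (\rho^{\iota}_{v})_{v \in V_{G}}$ in which $\rho^{\iota}_{v}$ records the cyclic order in which the edges of $I_{G}(v)$ leave $v$, read clockwise with respect to the chosen orientation of $\Sigma$. Write $\Phi$ for the map sending a rotation system $\rho$ to the $2$-cell embedding $\iota_{\rho}$ obtained by forming one polygon for each orbit of $\phi = \sigma \circ \alpha$ on $D_{G}$ and gluing their sides according to the involution $\alpha$. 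First I would check that $\Psi$ is well defined on equivalence classes: an orientation-preserving homeomorphism between two embeddings carries clockwise cyclic orders to clockwise cyclic orders, so homeomorphic embeddings yield the same rotation system. I would also record that $\Phi$ genuinely lands among $2$-cell embeddings on orientable closed surfaces: each edge $e$ contributes exactly the two darts $(e,u,v)$ and $(e,v,u)$, so every side of every polygon is identified with exactly one other side by $\alpha$, the quotient is a closed surface, each face is the interior of a polygon and hence an open disk, and orientability follows from the coherent orientation of the polygons supplied by the cited reference.

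The round trip $\rho \mapsto \iota_{\rho} \mapsto \rho^{\iota_{\rho}}$ is the easy one. By construction the darts leaving a vertex $v$ are cyclically permuted by $\sigma$ in exactly the order prescribed by $\rho_{v}$, and in the surface assembled from the polygons this cyclic order is precisely the clockwise order of the edges around $v$. Hence $\rho^{\iota_{\rho}}_{v} = \rho_{v}$ for every $v$, so $\Psi \circ \Phi = \mathrm{id}$ on rotation systems.

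The substance of the theorem, and the step I expect to be the main obstacle, is the reverse round trip $\iota \mapsto \rho^{\iota} \mapsto \iota_{\rho^{\iota}}$, where one must show that the abstractly assembled surface underlying $\iota_{\rho^{\iota}}$ is orientation-preservingly homeomorphic to the original $\Sigma$. The key lemma is that the face-tracing recipe agrees with the genuine faces of $\iota$: inspecting a small neighbourhood of each vertex, the corners of the faces of $\iota$ incident to $v$ are exactly the consecutive pairs of darts in the clockwise order $\rho^{\iota}_{v}$, which is precisely what the rule $\phi = \sigma \circ \alpha$ encodes. Consequently the boundary walk of each face of $\iota$ is exactly one orbit of $\phi$ on $D_{G}$, so the faces of $\iota$ are in bijection with the $\phi$-orbits and each is an open disk whose closure is attached to $G$ along its boundary walk. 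Cutting $\Sigma$ along $G$ therefore exhibits $\Sigma$ as the same polygonal gluing that defines $\iota_{\rho^{\iota}}$, so the identity map on $G$ extends to a homeomorphism of the two surfaces matching faces; since it respects the chosen orientation it is orientation-preserving, giving $\Phi \circ \Psi = \mathrm{id}$ on embedding classes. The delicate points to handle with care are the local corner analysis at vertices and the verification that the reconstructed surface carries the original orientation rather than its mirror image; the remainder is bookkeeping with darts and the involution $\alpha$. Combining the two round trips shows that $\Phi$ and $\Psi$ are mutually inverse bijections.
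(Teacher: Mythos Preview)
The paper does not give a proof of this theorem: it is quoted from the literature with the attribution ``See \cite[Theorem 3.2.4]{mohar2001graphs}'' and no argument follows. The preceding paragraphs of Section~\ref{section: embedding and rotation system} describe only one direction of the correspondence---the construction you call $\Phi$, building a surface from a rotation system via the dart set $D_G$, the involution $\alpha$, and the face-tracing map $\phi = \sigma \circ \alpha$---and then defer orientability and the bijectivity of the correspondence to Mohar and Thomassen's book. So there is no in-paper proof to compare your proposal against.

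That said, your outline is the standard proof of this classical result and is consistent with the construction the paper sketches. The points you flag as delicate---the local corner analysis showing that $\phi$-orbits match genuine face boundary walks, and the orientation check---are exactly the ones a careful write-up must address; the rest, as you say, is bookkeeping.
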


Note that, from the construction, the number of the faces of the embedding corresponding to a rotation system is equal to the number of the orbits in $ D_{G}/\langle \phi \rangle $. 

Let $ \omega $ be an edge ordering of $ G $. 
For each $ v \in V_{G} $, let $ \omega_{v} $ denote the linear order on $ I_{G}(v) $ induced by $ \omega $. 
Moreover, let $ \rho_{\omega, v} $ be the cyclic order on $ I_{G}(v) $ determined by $ \omega_{v} $. 
Thus we obtain the rotation system $ \rho_{\omega} \coloneqq (\rho_{\omega, v})_{v \in V_{G}} $ from an edge ordering $ \omega $ and hence the corresponding bijections $ \sigma_{\omega} $ and $ \phi_{\omega} = \sigma_{\omega} \circ \alpha $.

\begin{lemma}\label{edge ordering to rotation system}
Let $ \omega = (e_{1}, \dots, e_{m}) $ be an edge ordering of a graph $ G $. 
For each $ v \in V_{G} $, let $ f_{v} $ denote the minimal edge in $I_{G}(v)$ with respect to $ \omega $. 
Define a map $ \Psi \colon V_{G}/\langle \pi_{\omega} \rangle \to D_{G}/\langle \phi_{\omega} \rangle $ by 
\begin{align*}
\Psi([v]) \coloneqq [(f_{v}, v)], 
\end{align*}
where the brackets denote equivalence classes. 
Then $ \Psi $ is a bijection. 
\end{lemma}
\begin{proof}
First, we will show that the map $ \Psi $ is well-defined. 
It is sufficient to show that $[(f_{v},v)] = [(f_{\pi_{\omega}(v)}, \pi_{\omega}(v))]$ for each $v \in V_{G}$.

Fix $ v \in V_{G} $. 
The edge ordering $\omega = (e_{1}, \dots, e_{m})$ defines the set $T_{v}$ as follows. 
\begin{align*}
T_{v} \coloneqq \Set{e_{i} \in E_{G} | (\tau_{i}\tau_{i-1} \cdots \tau_{1})(v) \neq (\tau_{i-1} \cdots \tau_{1})(v) }, 
\end{align*}
where we agree with $(\tau_{i-1} \cdots \tau_{1})(v) = v $ if $i = 1$. 
Suppose that $T_{v}  = \Set{e_{j_{1}}, e_{j_{2}}, \dots, e_{j_{s}}}$ with $ j_{1} < j_{2} < \dots <j_{s} $. 
Then $(e_{j_{1}}, \dots, e_{j_{s}})$ is a trail from $v$ to $\pi_{\omega}(v)$. 
Let $ v_{0} \coloneqq v $ and for $k \in \{1, \dots, s\}$ define $v_{k}$ recursively as the endvertex of $e_{j_{k}}$ other than $v_{k-1}$. 
Note that $v_{s} = \pi_{\omega}(v)$. 

From the definition of $T_{v}$, for each $k \in \{1, \dots, s-1\}$,  $e_{j_{k+1}}$ covers $e_{j_{k}}$ in $I_{G}(v_{k})$ with respect to $\omega$. 
Therefore $\phi_{\omega}(e_{j_{k}}, v_{k-1}) = (e_{j_{k+1}},v_{k})$. 
Since $e_{j_{s}}$ is the maximal element in $I_{G}(v_{s})$ with respect to the order $\omega$, we have $\phi_{\omega}(e_{j_{s}}, v_{s-1}) = (f_{v_{s}}, v_{s}) = (f_{\pi_{\omega}(v)}, \pi_{\omega}(v))$. 
Thus $\Psi$ is well-defined. 

Next, to prove the surjectivity, take an orbit $ W \in D_{G}/\langle \phi_{\omega} \rangle $. 
Let $f$ be the minimal element in 
\begin{align*}
\Set{e \in E_{G} | (e,v) \in W \text{ for some } v \in V_{G}}
\end{align*}
with respect to $\omega$. 
Suppose $(f,v) \in W$. 
Assume $ f $ is not minimal in $ I_{G}(v) $ with respect to $ \omega_{v} $. 
Then $ f^{\prime} \coloneqq \sigma_{\omega}^{-1}(f) $ is less than $ f $  in $I_{G}(v)$ with respect to $ \omega $.
Let $r_{G}(f^{\prime}) = \{v,v^{\prime}\}$.
Then $\phi_{\omega}(f^{\prime}, v^{\prime}) = (f,v)$ and hence $(f^{\prime}, v^{\prime}) \in W$.  
This contradicts to the minimality of $ f $. 
Therefore $ f $ is minimal in $ I_{G}(v) $ and hence $f = f_{v}$. 
Hence $ \Psi([v]) = [(f, v)] = W $. 

Finally, we prove the injectivity. 
Let $ u,v \in V_{G} $ and suppose that $ \Psi([u]) = \Psi([v]) $. 
Then we have $ \phi_{\omega}^{s}(f_{u}, u) = (f_{v}, v) $ for some $ s \in \mathbb{Z} $. 
Without loss of generality, we can assume that $ s > 0 $. 

Recall the edges of $G$ are ordered by $\omega = (e_{1}, \dots, e_{m})$. 
We can write the edge $f_{u}$ as $f_{u} = e_{j_{0}}$ with some $j_{0} \in \{1, \dots, m\}$. 
Moreover we can obtain the walk $(e_{j_{0}}, e_{j_{1}}, \dots, e_{j_{s}})$ by $(e_{j_{k}}, v_{k}) \coloneqq \phi_{\omega}(e_{j_{k-1}}, v_{k-1})$ for $k \in \{1, \dots, s\}$, where $v_{0} \coloneqq u$. 
Note that $ (e_{j_{0}}, v_{0}) = (f_{u}, u) $ and $ (e_{j_{s}}, v_{s}) = (f_{v}, v) $. 
Suppose that 
\begin{align*}
\Set{k \in \{1, \dots, s\} | j_{k-1} \geq j_{k} }  = \{p_{1}, \dots, p_{t}\}
\end{align*}
with $ p_{1} < \dots < p_{t} = s $. 
Then $ \pi_{\omega}^{i}(v_{0}) = v_{j_{p_{i}}} $ for $ i \in \{1, \dots, t\} $. 
In particular, $ \pi_{\omega}^{t}(u) = \pi_{\omega}^{t}(v_{0}) = v_{j_{p_{t}}} = v_{s} = v $. 
Thus $ [u] = [v] $ and hence $ f $ is injective. 
\end{proof}

Now we are ready to prove that (\ref{main theorem: full cyclic 1}) implies (\ref{main theorem: full cyclic 2}). 
\begin{proof}[Proof that (\ref{main theorem: full cyclic 1}) implies (\ref{main theorem: full cyclic 2}) in Theorem \ref{main theorem: full cyclic}]
Let $\omega$ be a full cyclic permutation ordering of $G$. 
Then the number of faces of the $2$-cell embedding corresponding to the rotation system $\rho_{\omega}$ is equal to $|D_{G}/\langle \phi_{\omega}\rangle| = |V_{G}/\langle\pi_{\omega}\rangle| = 1$. 
\end{proof}

\section{Identity permutation ordering}\label{section: identity permutation ordering}

In this section, a graph is not necessarily connected. 
We say that an edge ordering $ \omega $ of a graph $ G $ is an \textbf{identity permutation ordering} if $ \pi_{\omega} = \varepsilon $, where $ \varepsilon $ denotes the identity permutation. 
Every edgeless graph vacuously has an identity permutation ordering. 
The minimal example of a non-trivial graph having an identity permutation ordering is the $ 2 $-cycle $ C_{2} $. 
Eden \cite{eden1967relation-joct} studied simple graphs that have an identity permutation ordering and mentioned the complete graph $ K_{4} $ is the minimal example. 
Figure \ref{fig: C2 and K4} shows identity permutation orderings of $ C_{2} $ and $ K_{4} $. 

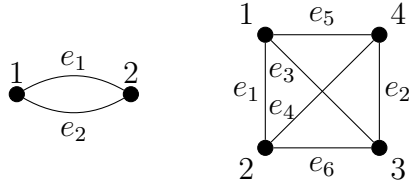
\begin{figure}
\centering
\begin{tikzpicture}[baseline=-36]
\draw (0,0) node[v](1){} node[above](){$ 1 $};
\draw (1.5,0) node[v](2){} node[above](){$ 2 $};
\draw (0.75, 0.45) node(){$ e_{1} $};
\draw (0.75,-0.5) node(){$ e_{2} $};
\draw (1) to[out=30, in=150] (2);
\draw (1) to[out=330, in=210] (2);
\end{tikzpicture}
\qquad
\begin{tikzpicture}
\draw (0,1.5) node[v](1){} node[above left](){$ 1 $};
\draw (0,0) node[v](2){} node[below left](){$ 2 $};
\draw (1.5,0) node[v](3){} node[below right](){$ 3 $};
\draw (1.5,1.5) node[v](4){} node[above right](){$ 4 $};
\draw (1) -- (2) node[midway, xshift=-7]{$ e_{1} $}; 
\draw (1) -- (3) node[midway, xshift=-15, yshift=7]{$ e_{3} $}; 
\draw (1) -- (4) node[midway, yshift=7]{$ e_{5} $}; 
\draw (2) -- (3) node[midway, yshift=-7]{$ e_{6} $}; 
\draw (2) -- (4) node[midway, xshift=-15, yshift=-7]{$ e_{4} $}; 
\draw (3) -- (4) node[midway, xshift=7]{$ e_{2} $}; 
\end{tikzpicture}
\caption{The $ 2 $-cycle $ C_{2} $ and the complete graph $ K_{4} $ have an identity permutation ordering.}
\label{fig: C2 and K4}
\end{figure}

Eden gave necessary conditions (without proof) for simple connected graphs having an identity permutation ordering as follows. 
\begin{proposition}[{Eden \cite[P. 130]{eden1967relation-joct}}]\label{Eden's conditions}
Let $ G $ be a simple connected graph on $ n $ vertices with $ m $ edges. 
If $ G $ has an identity permutation ordering, then the following conditions hold. 
\begin{enumerate}[(1)]
\item $ m $ is even. 
\item There exist a set $ \mathcal{C} $ consisting of closed trails and a map $ \psi \colon V_{G} \to \mathcal{C}$ such that the following conditions hold. 
\begin{enumerate}[(i)]
\item $ \psi $ is bijective. 
\item Every $ v \in V_{G} $ belongs to the closed trail $ \psi(v) $. 
\item The sum of the number of edges of closed trails in $ \mathcal{C} $ is $ 2m $. 
\item Each edge of $ G $ belongs to exactly two closed trails in $ \mathcal{C} $. 
\end{enumerate}
\end{enumerate}
\end{proposition}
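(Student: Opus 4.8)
The plan is to split the proof according to the two conclusions: condition (1) is a parity statement, while conditions (2)(i)--(iv) will all be read off from the rotation-system correspondence of Section \ref{section: embedding and rotation system}, specialized to the case $\pi_\omega = \varepsilon$.

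For condition (1), I would simply track signs. Writing $\pi_\omega = \tau_{e_m}\cdots\tau_{e_1}$ as a product of $m$ transpositions, its signature is $(-1)^m$; since an identity permutation ordering means $\pi_\omega = \varepsilon$, which is even, we get $(-1)^m = 1$ and hence $m$ is even.

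For condition (2), fix an identity permutation ordering $\omega$ and form the rotation system $\rho_\omega$ together with its $2$-cell embedding, as in Section \ref{section: embedding and rotation system}. Because $\pi_\omega = \varepsilon$ fixes every vertex, the orbit set $V_G/\langle \pi_\omega\rangle$ consists precisely of the $n$ singletons $\{v\}$, so Lemma \ref{edge ordering to rotation system} gives a bijection $f$ from these $n$ singletons onto $D_G/\langle \phi_\omega\rangle$; in particular the embedding has exactly $n = |V_G|$ faces. I would then take $\mathcal{C}$ to be the set of these faces, each viewed as the closed walk traced out by the darts of the corresponding $\phi_\omega$-orbit, and define $g := f$ under the identification $v \leftrightarrow \{v\}$. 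Condition (i) is then exactly the bijectivity asserted by Lemma \ref{edge ordering to rotation system}, and condition (ii) holds because the orbit $f([v])$ contains the dart $(e_{i_v}, v, \tau_{i_v}(v))$ issuing from $v$, so the walk $g(v)$ visits $v$.

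The two counting conditions then follow from elementary bookkeeping on the dart set $D_G$. The closed walk attached to a $\phi_\omega$-orbit has length equal to the number of darts in that orbit, so summing over $\mathcal{C}$ gives total length $|D_G| = 2m$, which is condition (iii). For condition (iv), each edge $e$ with $r_G(e) = \{u,v\}$ contributes exactly the two darts $(e,u,v)$ and $(e,v,u)$, and each dart lies in exactly one orbit, so $e$ is traversed exactly twice across the walks in $\mathcal{C}$. The point I expect to need the most care is precisely this last count: the two darts of $e$ may belong to the same orbit (for instance at a bridge), so the phrase ``belongs to exactly two closed trails'' must be read with multiplicity, i.e.\ as the two directed traversals of $e$. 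I would therefore phrase (iv) in terms of darts rather than claiming membership in two distinct faces, and remark that the faces here are genuine closed walks, not always trails, for exactly this reason.
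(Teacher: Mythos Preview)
Your route via Lemma \ref{edge ordering to rotation system} is exactly the paper's, and your arguments for (1), (i), (ii), (iii) are fine. The gap is in (iv) and in the word ``trail''. You flag the possibility that both darts of an edge lie in the same $\phi_\omega$-orbit and propose to downgrade (iv) to a multiplicity count; but that does not prove the statement as written, and in fact the worry is unfounded precisely because $\pi_\omega = \varepsilon$. Go back to the well-definedness argument in Lemma \ref{edge ordering to rotation system}: the darts in the orbit through $(e_{i_v},v,\tau_{i_v}(v))$ are exactly the $(e_{j_p}, v_{p-1}, v_p)$ where $e_{j_1} <_\omega \cdots <_\omega e_{j_k}$ are the edges that move $v$ during one pass of $\tau_{e_m}\cdots\tau_{e_1}$, and since $\pi_\omega(v)=v$ the orbit closes after that single pass. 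The indices $j_1<\cdots<j_k$ are distinct, so each edge occurs at most once in the orbit; hence every $f(v)$ is a genuine closed trail and the two darts of any edge sit in two distinct orbits. The paper compresses this to the one-line remark that no $d$ and $\alpha(d)$ lie in the same $f(v)$ ``since $\pi_\omega$ is the identity''. With this in hand, (iv) holds on the nose, and your hedge about bridges and multiplicities should be removed.

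A second point worth noting: you never use the hypothesis that $G$ is simple. The paper does, because it takes $g(v)$ to be the \emph{undirected} trail underlying $f(v)$ and must then re-prove injectivity of $g$: two distinct orbits might traverse the same edge set in opposite directions (this actually happens for the dipole in Figure \ref{fig: dipole and K4}). The paper shows that $g(u)=g(v)$ with $u\neq v$ forces the $\omega$-minimal and $\omega$-maximal edges of that common trail both to join $u$ to $v$, hence to be parallel. If instead you keep $\mathcal{C}$ as the set of dart orbits viewed as directed closed trails, bijectivity comes for free from Lemma \ref{edge ordering to rotation system} and simplicity is never needed; but then you should say so explicitly rather than leave the hypothesis unused.
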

\begin{proof}
Suppose that $ \omega = (e_{1}, \dots, e_{m}) $ be an identity permutation ordering. 
Since the identity permutation is an even permutation, we have $ m $ is even. 
Let $ \Psi \colon V_{G} \to D_{G}/\langle \phi_{\omega} \rangle $ be the bijection considered in Lemma \ref{edge ordering to rotation system}. 
Note that $ V_{G} = V_{G}/\langle \pi_{\omega} \rangle $ since $ \pi_{\omega} = \varepsilon $. 

For each $ v \in V_{G} $, there exists no dart $ d \in D_{G} $ such that both $ d $ and $ \alpha(d) $ belong to $ \Psi(v) $ since $ \pi_{\omega} $ is the identity. 
Thus, forgetting the direction of each dart in $ \Psi(v) $, we obtain the closed trail $ \psi(v) \subseteq E_{G} $. 
Letting $ \mathcal{W} \coloneqq \Set{\psi(v) | v \in V_{G}} $, we have a surjection $ \psi \colon V_{G} \to \mathcal{W} $. 

We will prove $ \psi $ is injective. 
Assume that there exist distinct vertices $ u,v $ such that $ \psi(u) = \psi(v) $. 
Let $ \omega^{\prime} = (f_{1}, \dots, f_{r}) $ be the induced order of $ \omega $ on $ \psi(u) $. 
Since $ \pi_{\omega^{\prime}}(u) = u $, the edges $ f_{1} $ and $ f_{r} $ are incident to $ u $. 
Also, $ f_{1} $ and $ f_{r} $ are incident to $ v $ by the same reason. 
Hence $ f_{1} $ and $ f_{r} $ are parallel edges between $ u $ and $ v $. 
This contradicts that $ G $ is simple. 
Therefore $ \psi $ is injective and hence bijective. 

By the definition of maps $ \Psi $ and $ \psi $, every $ v \in V_{G} $ belongs to $ \psi(v) $. 
Moreover, 
\begin{align*}
\sum_{v \in V_{G}}|\psi(v)| = \sum_{v \in V_{G}}|\Psi(v)| = |D_{G}| = 2m. 
\end{align*}
For any $ e \in E_{G} $, the two darts on $ e $ belongs distinct orbits $ \Psi(u) $ and $ \Psi(v) $. 
Then $ e $ belongs to $ \psi(u) $ and $ \psi(v) $ and the other trails do not contain $ e $. 
Thus the map $ \psi \colon V_{G} \to \mathcal{C} $ has the desired properties. 
\end{proof}

\begin{example}
For the complete graph $ K_{4} $ in Figure \ref{fig: C2 and K4}, the following map $ \psi $ satisfies the conditions in Proposition \ref{Eden's conditions}. 
\begin{align*}
\psi(1) = \{e_{1}, e_{4}, e_{5}\}, \quad
\psi(2) = \{e_{1}, e_{3}, e_{6}\}, \quad
\psi(3) = \{e_{2}, e_{4}, e_{6}\}, \quad
\psi(4) = \{e_{2}, e_{3}, e_{5}\}. 
\end{align*}
\end{example}

Eden asked whether the necessary conditions in Proposition \ref{Eden's conditions} are also sufficient. 
We will give a counterexample for this question. 
Let $ G $ be the graph on $ 12 $ vertices with $ 20 $ edges pictured in Figure \ref{fig: counterexample for Eden's question}.
\begin{figure}[t]
\centering
\begin{tikzpicture}
\draw (-1,2) node[v](1){} node[above]{$ v_{1} $};
\draw (-2,1) node[v](2){} node[left]{$ v_{2} $};
\draw ( 1,2) node[v](3){} node[above]{$ v_{3} $};
\draw ( 2,1) node[v](4){} node[right]{$ v_{4} $};
\draw (-1,-2) node[v](5){} node[below]{$ v_{5} $};
\draw (-2,-1) node[v](6){} node[left]{$ v_{6} $};
\draw ( 1,-2) node[v](7){} node[below]{$ v_{7} $};
\draw ( 2,-1) node[v](8){} node[right]{$ v_{8} $};
\draw (-1,0) node[v](9){} node[left]{$ v_{9} $};
\draw ( 0,1) node[v](10){} node[yshift = 10]{$ v_{10} $};
\draw ( 1,0) node[v](11){} node[right]{$ v_{11} $};
\draw ( 0,-1) node[v](12){} node[yshift = -13]{$ v_{12} $};
\draw (2) -- (9) -- (10) -- (2) -- (1) -- (10) -- (3) -- (4) -- (10) -- (11) -- (4); 
\draw (6) -- (5) -- (12) -- (6) -- (9) -- (12) -- (11) -- (8) -- (12) -- (7) -- (8); 
\end{tikzpicture}
\caption{A counterexample for Eden's question.}
\label{fig: counterexample for Eden's question}
\end{figure}
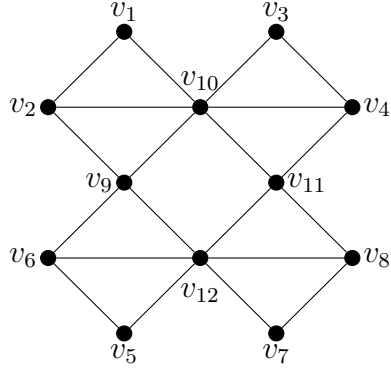 
Define a map $ \psi $ by 
\begin{align*}
\psi(v_{1}) &= v_{1}v_{2}v_{10}v_{1}, \quad
\psi(v_{2}) = v_{2}v_{9}v_{10}v_{2}, \quad
\psi(v_{3}) = v_{3}v_{10}v_{4}v_{3}, \quad
\psi(v_{4}) = v_{4}v_{10}v_{11}v_{4}, \\
\psi(v_{5}) &= v_{5}v_{6}v_{12}v_{5}, \quad
\psi(v_{6}) = v_{6}v_{9}v_{12}v_{6}, \quad
\psi(v_{7}) = v_{7}v_{8}v_{12}v_{7}, \quad
\psi(v_{8}) = v_{8}v_{11}v_{12}v_{8}, \\
\psi(v_{9}) &= v_{9}v_{2}v_{1}v_{10}v_{9}, \
\psi(v_{10}) = v_{10}v_{3}v_{4}v_{11}v_{10}, \
\psi(v_{11}) = v_{11}v_{8}v_{7}v_{12}v_{11}, \
\psi(v_{12}) = v_{12}v_{5}v_{6}v_{9}v_{12}. 
\end{align*}

The conditions in Proposition \ref{Eden's conditions} are satisfied. 
Suppose that $ G $ has an identity permutation ordering $ \omega $. 
By Lemma \ref{edge ordering to rotation system} there exists a $ 2 $-cell embedding $ \iota $ with $ f_{\iota} = |D_{G}/\langle \phi_{\omega} \rangle| = |V_{G}| = 12 $ faces. 
Then the genus $ g_{\iota} $ satisfies 
\begin{align*}
2-2g_{\iota} = |V_{G}| - |E_{G}| + f_{\iota} = 12 - 20 + 12 = 4. 
\end{align*}
Therefore $ g_{\iota} = -1 $, which is a contradiction. 

\begin{problem}
Characterize graphs that have an identity permutation ordering. 
\end{problem}

\section*{Acknowledgment}
The authors wish to thank Professor Sachiko Saito for her valuable comments about topics of topology. 
The authors would also like to express their deepest appreciation to the anonymous referee for careful reading and indicating deficiency in a proof. 

\bibliographystyle{amsplain}
\bibliography{bibfile}

\end{document}